\newcommand{\R}{\mathbb{R}}
\newcommand{\Z}{\mathbb{Z}}
\newcommand{\eps}{\varepsilon}
\DeclareMathOperator{\Id}{Id}
\DeclareMathOperator{\dist}{dist}
\DeclareMathOperator{\spn}{span}
\DeclareMathOperator{\supp}{supp}
\newtheorem{theorem}{Theorem}
\newtheorem{fact}{Fact}
\newtheorem{lemma}{Lemma}
\newtheorem{corollary}{Corollary}[theorem]
\title[Stability of nearly optimal decompositions]{Stability of nearly optimal decompositions in Fourier Analysis}
\author{Anton Tselishchev}
\thanks{This research was supported by the Russian Science Foundation (grant No.~18-11-00053).}
\address{Chebyshev Laboratory, St. Petersburg State University, 14th Line V.O., 29B, Saint Petersburg 199178 Russia}
\address{St. Petersburg Department of Steklov Mathematical Institute, Fontanka 27, St. Petersburg 191023, Russia}
\email{celis-anton@yandex.ru}
\date{}
\begin{document}

\begin{abstract}
The question of existence is treated for near-minimizers for the distance functional (or $E$-functional in the interpolation terminology) that are stable under the action of certain operators. In particular, stable near-minimizers for the couple $(L^1, L^p)$ are shown to exist when the operator is the projection on wavelets and these wavelets possess only some weak conditions of decay at infinity.
\end{abstract}

\maketitle

\section{Introduction}
Let $(X, Y)$ be a couple of Banach spaces and $f\in X$. Consider the distance functional from $f$ to the ball of radius $s$ in 
$Y$:
$$
E(s, f; X, Y)=\dist_X (f, B_Y(s))=\inf\{\|f-g\|_X: \|g\|_Y\leq s\}.
$$
In the book \cite{KK} the near-minimizers for this functional (and some other functionals) are studied. By this we mean functions $g$ such that
$$
\|g\|_Y\leq Cs\ \  \hbox{and}\ \  \|f-g\|_X\leq C\dist_X\big(f, B_Y\big(\frac{s}{C}\big)\big).
$$
We are interested in the behaviour of near-minimizers under the action of certain operators $T$. It is clear that if 
$T$ is bounded on $X$ and $Y$ then $Tg$ will also belong to the ball of radius nearly $s$ in $Y$ 
(which means that $\|Tg\|_Y\leq Cs$) and $\|Tf-Tg\|_X\leq C\dist (f, B_Y(\frac{s}{C}))$ (here $C$ stands for some other constant). In particular if $\dist_X(f, B_Y(t))\leq C\dist_X(Tf, B_Y(t))$ then $Tg$ will be a near-minimizer for $Tf$. 

In this regard, we will be interested in operators which are unbounded on $X$ --- can we say something about their action on near-minimizers? The corresponding stability theorems are helpful in reducing the problems of evaluation of various functionals in interpolation theory (and thus the interpolation spaces) for complicated pairs of Banach spaces to the case of more 
simple embracing pairs. Stable near-minimizers for $K$-functionals are the most effective tools for these problems, cf., for example the "shift of smoothness" theorem in \S 10.2.2. in the book \cite{KK}. In this article, however, we study the more
"demonstrative" distance functional (or $E$-functional in the interpolation terminology). However, the problems about near-minimizers for $E$- and $K$- functional can in a sense be reduced to one another --- cf. \S 5.4. in \cite{KK}. 

In the book \cite{KK} $T$ usually stands for a Calder\' on--Zygmund operator and $X$ --- for the space $L^1$. As for the space 
$Y$, the $L^p$ spaces with $1<p<\infty$, $L^\infty$ or (homogeneous) Campanato spaces $\dot{C}_p^{s,k}$ are considered. The minimizers which are in a sense "stable" under the action of $T$ are constructed there. The essential ingredients of these
constructions are the Calder\' on--Zygmund decompositions or its smooth analogues.

One of the statements proved in that book is the following. 

\begin{theorem}
Let $T$ be a Calder\' on --Zygmund operator and $f\in L^1$ is a function for which $Tf\in L^1$. Then for any $s>0$ there exists
such function $u^{(s)}\in L^1$ that the following conditions hold:
\begin{align*}
   \|u^{(s)}\|_{L^p}\lesssim& s,\\
   \|f-u^{(s)}\|_{L^1}\lesssim& \dist_{L^1}(f,B_{L^p}(s)),\\
   \|Tf-Tu^{(s)}\|_{L^1}\lesssim& \dist_{L^1}(f,B_{L^p}(s))+\dist_{L^1}(Tf,B_{L^p}(s)).
  \end{align*}
\end{theorem}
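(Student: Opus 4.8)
The plan is to begin from the exact minimizer of the constrained problem and then repair it so that the error term acquires cancellation. First I would identify the optimal decomposition for the couple $(L^1,L^p)$: a pointwise Lagrange-multiplier computation shows that among all $g$ with $\|g\|_{L^p}\le s$ the quantity $\|f-g\|_{L^1}$ is minimized by the truncation $u_0=\mathrm{sgn}(f)\min(|f|,c)$, where the level $c=c(s)$ is fixed by $\|\min(|f|,c)\|_{L^p}=s$; consequently $\dist_{L^1}(f,B_{L^p}(s))=\|f-u_0\|_{L^1}=\int(|f|-c)_+$. This $u_0$ already satisfies the first two desired inequalities, but its error $f-u_0=f_{\mathrm{large}}:=\mathrm{sgn}(f)(|f|-c)_+$ carries no cancellation, so $\|T(f-u_0)\|_{L^1}$ need not be controlled.

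To fix the third inequality I would apply a Calder\'on--Zygmund decomposition, not to $f$, but to the large part $f_{\mathrm{large}}\in L^1$, at a height $\eta$ to be chosen. This produces cubes $\{Q_j\}$ with $\Omega:=\bigcup_jQ_j$ and $|\Omega|\le\eta^{-1}\|f_{\mathrm{large}}\|_{L^1}$, a good part $g^\flat$ (equal to $f_{\mathrm{large}}$ off $\Omega$ and to the mean of $f_{\mathrm{large}}$ on each $Q_j$), and a bad part $b=\sum_jb_j$ with $\supp b_j\subseteq Q_j$, $\int b_j=0$, and $\|b\|_{L^1}\le2\|f_{\mathrm{large}}\|_{L^1}$. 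I then set $u:=u_0+g^\flat$, so that $f-u=b$ is mean-zero on each cube. The crucial point is the calibration $\eta=\big(s^p/\dist_{L^1}(f,B_{L^p}(s))\big)^{1/(p-1)}$: the standard estimate $\|g^\flat\|_{L^p}^p\lesssim\eta^{p-1}\|f_{\mathrm{large}}\|_{L^1}=\eta^{p-1}\dist_{L^1}(f,B_{L^p}(s))$ then forces $\|g^\flat\|_{L^p}\lesssim s$, whence $\|u\|_{L^p}\lesssim s$, while $\|f-u\|_{L^1}=\|b\|_{L^1}\lesssim\dist_{L^1}(f,B_{L^p}(s))$.

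For the third inequality I would split $\|Tf-Tu\|_{L^1}=\|Tb\|_{L^1}$ over $\R^n\setminus\Omega^*$ and $\Omega^*$, where $\Omega^*=\bigcup_j2Q_j$. Off $\Omega^*$ the mean-zero property of each $b_j$ together with the H\"ormander regularity of the kernel yields the classical bound $\|Tb\|_{L^1(\R^n\setminus\Omega^*)}\lesssim\|b\|_{L^1}\lesssim\dist_{L^1}(f,B_{L^p}(s))$. The genuine difficulty is the piece over $\Omega^*$, where no cancellation is available; here I would write $Tb=Tf-Tu$ and estimate the two terms separately. Since $\|u\|_{L^p}\lesssim s$ and $T$ is bounded on $L^p$, H\"older on the finite-measure set gives $\|Tu\|_{L^1(\Omega^*)}\le|\Omega^*|^{1/p'}\|Tu\|_{L^p}\lesssim|\Omega|^{1/p'}s$; and for any $h$ with $\|h\|_{L^p}\le s$ one has $\|Tf\|_{L^1(\Omega^*)}\le\|Tf-h\|_{L^1}+|\Omega^*|^{1/p'}s$, so passing to the infimum (which is finite precisely because $Tf\in L^1$) yields $\|Tf\|_{L^1(\Omega^*)}\lesssim\dist_{L^1}(Tf,B_{L^p}(s))+|\Omega|^{1/p'}s$.

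What remains is to absorb the recurring term $|\Omega|^{1/p'}s$ into the right-hand side, and this is exactly where the calibration of $\eta$ pays off. Using $|\Omega|\le\eta^{-1}\dist_{L^1}(f,B_{L^p}(s))$ and the identity $(p-1)p'=p$, the chosen $\eta$ gives $\eta^{-1/p'}=\big(\dist_{L^1}(f,B_{L^p}(s))/s^p\big)^{1/p}$, so that
$$
|\Omega|^{1/p'}s\le\big(\eta^{-1}\dist_{L^1}(f,B_{L^p}(s))\big)^{1/p'}s=\dist_{L^1}(f,B_{L^p}(s)),
$$
and the three contributions combine to the asserted bound. Thus the only real obstacle is the estimate over $\Omega^*$: the hypothesis $Tf\in L^1$ enters solely to guarantee that $\dist_{L^1}(Tf,B_{L^p}(s))$ is finite, while the whole argument hinges on tuning the single parameter $\eta$ so that the $L^p$-budget for $g^\flat$ and the measure bound for $\Omega$ balance simultaneously.
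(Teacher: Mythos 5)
Your proposal is correct and is essentially the argument the paper (following Bourgain and the book of Kislyakov--Kruglyak) uses: one corrects a near-minimizer by adding the good part of a Calder\'on--Zygmund decomposition of the error at a height $t$ calibrated by $t^{p-1}\dist_{L^1}(f,B_{L^p}(s))\approx s^p$, then estimates $T(\text{bad part})$ off the dilated cubes via the mean-zero/H\"ormander cancellation and on the dilated cubes via H\"older together with a near-minimizer for $Tf$. Your only departures are cosmetic --- starting from the exact truncation minimizer rather than an arbitrary near-minimizer, and rearranging the estimate over $\Omega^*$ --- so the proof goes through as written.
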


Here we say that $A\lesssim B$ if $A\leq CB$ for some constant $C$. It will always be clear from the context from which parameters $C$ can depend and from which it can not (or it will be stated explicitly). Here these constants do not depend on $s$ and $f$.

The first two conditions in this theorem mean that $u^{(s)}$ is a near-minimizer for the distance functional for $f$ at $s$ and the third one says that $Tu^{(s)}$ behaves much like the near-minimizer for the distance functional for $Tf$ at $s$ (in particular, it will be the near-minimizer if the second term majorizes the first one).

One of the proofs presented in the book reproduces Bourgain's arguement from paper \cite{Bo} --- the arbitrary near-minimizer is turned to the stable one by adding the summand which is a "good" part of Calder\' on--Zygmund decomposition of a certain function.

We will be intersted in stability of near-minimizers in some cases that are not treated in the book \cite{KK} --- more precisely, when operator $T$ is a projection on wavelets which possess only some weak conditions of decay at infinity (in this case $T$ might not be the singular integral operator in the classic sense) or when $T$ is a usual singular intagral operator but $X$ and $Y$ are weighted $L^1$ and $L^p$ spaces. The proofs will also use the Bourgain's arguments but instead of the standard Calder\' on--Zygmund decomposition of a function into the "bad" and "good" parts some other suitable decompositions will be useful.

The author is kindly greatful to his scientific advisor, S. V. Kislyakov, for posing these problems and for the continuous support during the process of their solutions.

\section{The stability theorem for projections on wavelets}
\subsection{Some helpful information about wavelets}
In this section we are using the notation $L^p$ for $L^p(\R)$. Let $\Psi$ be a wavelet. By this we mean that $\Psi\in L^2(\R)$ and functions $\{2^{j/2}\Psi(2^jx-k)\}_{(j, k)\in\Z^2}$ form orthonormal basis in $L^2(\R)$. We denote $2^{j/2}\Psi(2^jx-k)$ by 
$\Psi_{jk}(x)$.

Paper \cite{Wojt} contains a condition on $\Psi$ which guarantees that $\{\Psi_{jk}\}$ is unconditional basis not only for $L^2$ but for all $L^p$, $1<p<\infty$. Specifically, it says that there exists a function $\phi$ on $\R$ such that the following conditions hold for it:\\
1) $\phi(x)=\phi(-x)$ for all $x\in\R$;\\
2) $\phi$ is a decaying function on $[0, \infty]$;\\
3) $\phi$ is a bounded function on $\R$;\\
4) $\int_0^\infty \phi(x) \log(1+x)<\infty$;\\
5) $|\Psi(x)|\leq\phi(x)$ for all $x\in\R$.

We are assuming that this condition holds. It implies that $\{\Psi_{jk}\}$ is an unconditional basis for $L^p$. The proof of this fact is also presented in a book \cite{NPS}. Its main ingredient is a decomposition of a function into a sum of two other functions which we are going to need. In order to present it we will use some convenient notations.

Let $\eps=\{\eps_{jk}\}_{j,k\in\Z}$ be a collection of numbers each of which equals to $\pm 1$. We introduce the following operator $U_\eps$:
$$
U_\eps f:=\sum_{j, k\in\Z} \eps_{jk} \langle f, \Psi_{jk} \rangle \Psi_{jk}.
$$
In paper \cite{Wojt} it is proved that these operators are continuous in $L^p$ for all $1<p<\infty$ and their norms are uniformly bounded (in $\eps$). In fact, they are operators of weak type $(1, 1)$ with a constant which does not depend on $\eps$. We note that all of the subsequent facts are also true for operators $T$ of the form $(\Id+U_\eps)/2$ which are simply orthogonal projections in $L^2$ on $\spn\{\Psi_{jk}:(j, k)\in A\}$ where $A$ can be any subset of $\Z^2$ (by $\spn$ we mean the closed linear span).

For integer numbers $r$ and $l$ we denote the dyadic inerval $[2^{-r}l, 2^{-r}(l+1)]$ by $I_{rl}$.

For a function $f\in L^1$ and number $\lambda>0$ using Calder\' on--Zygmund decomposition we get the collection of intervals 
$\{I_{rl}\}_{(r,l)\in S}$ wih nonintersecting interiors such that for all of these intervals the following inequalities hold:
$$
\lambda< \frac{1}{|I_{rl}|}\int_{I_{rl}}|f|\leq 2\lambda
$$
and if $x\not\in \cup_{(r,l)\in S} I_{rl}$ the inequality $|f(x)|\leq\lambda$ holds a.e. We set $f_{rl}:=f\chi_{I_{rl}}$, 
$F:=\R\setminus \cup_{(r,l)\in S} I_{rl}$. Finally, we denote by $P_j$ the following orthogonal projection in $L^2$:
$$
P_jh:=\sum_{i<j}\sum_{k\in\Z} \langle h, \Psi_{ik}\rangle \Psi_{ik}
$$
and by $Q_j$ --- projection $\Id-P_j$:
$$
Q_jh:=\sum_{i\geq j}\sum_{k\in\Z} \langle h, \Psi_{ik}\rangle \Psi_{ik}
$$

The "good" part of the decomposition from paper \cite{Wojt} is then the function
$$
f_\lambda:=f\cdot\chi_f + \sum_{(r,l)\in S} P_r(f_{rl}).
$$
The remaining "bad" part is
$$
f-f_\lambda=\sum_{(r,l)\in S} Q_r(f_{rl}).
$$

We are going to need the following statements about this decomposition which are proved in the book \cite{NPS}.

\begin{fact}
Let $f$ be a function whith $\supp f\subset I_{rl}$. Then there exists a bounded even integrable function $\beta$ decaying on $[0, \infty)$ (and not depending on $f$) such that $\beta(2^j x)\leq 2^{4-j} \beta(x)$ if $|x|\geq 1$ and $j\in\Z_+$ and such that the following inequality holds:
$$
|P_rf(x)|\leq 2^r\|f\|_{L^1}\beta(2^rx-l).
$$
\end{fact}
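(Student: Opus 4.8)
The plan is to realize $P_r$ as an integral operator and to estimate its kernel for the argument $t$ confined to the dyadic interval $I_{rl}$. Writing $P_rf(x)=\int_\R K_r(x,t)f(t)\,dt$ with $K_r(x,t)=\sum_{i<r}\sum_{k\in\Z}\Psi_{ik}(x)\Psi_{ik}(t)$, the support assumption gives at once $|P_rf(x)|\le\|f\|_{L^1}\sup_{t\in I_{rl}}|K_r(x,t)|$, so everything reduces to a pointwise bound on the kernel. Using the unitary dilation $D_rg(x)=2^{r/2}g(2^rx)$ one checks $K_r(x,t)=2^rK_0(2^rx,2^rt)$, and since $t\in I_{rl}$ corresponds to $T:=2^rt\in[l,l+1]$, it suffices to construct a single function $\beta$ with $\sup_{T\in[l,l+1]}|K_0(X,T)|\le\beta(X-l)$ holding uniformly in $l\in\Z$ and $X\in\R$; the claimed estimate $|P_rf(x)|\le2^r\|f\|_{L^1}\beta(2^rx-l)$ follows immediately.

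The key idea for producing decay is to pass from the coarse scales to the fine ones. Because $\{\Psi_{ik}\}$ is a complete orthonormal system one has $P_r+Q_r=\Id$, and off the diagonal the Dirac kernel vanishes, so for $X\ne T$ we may replace the coarse-scale sum by $K_0(X,T)=-\sum_{i\ge0}\sum_k\Psi_{ik}(X)\Psi_{ik}(T)$. Dominating both series by $\phi$ through condition~5 shows each converges absolutely off the diagonal, which justifies the replacement. This step is essential: the naive estimate of the coarse-scale series yields only a bound of order $|X-l|^{-1}$, which fails to be integrable, whereas the fine-scale series will give an integrable bound. With $|\Psi|\le\phi$ we obtain $|K_0(X,T)|\le\sum_{m\ge0}2^m\sum_k\phi(2^mX-k)\phi(2^mT-k)$.

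Next I would estimate the inner sum over $k$. For each $m$ the factors $\phi(2^mX-\cdot)$ and $\phi(2^mT-\cdot)$ peak at $2^mX$ and $2^mT$, whose separation is $2^m|X-T|$; since $\phi$ is even and decreasing, at every lattice point at least one factor is controlled by $\phi$ at half the separation, and summability of $\phi$ over a unit lattice (uniform in the shift of the lattice, hence uniform in $l$) gives $\sum_k\phi(2^mX-k)\phi(2^mT-k)\lesssim\phi(2^{m-1}|X-T|)$. For $x\notin I_{rl}$ we have $|X-T|\ge|X-l|-1$, so writing $Y:=X-l$ we get $|X-T|\ge|Y|/2$ once $|Y|\ge2$, whence $|K_0(X,T)|\lesssim\sum_{m\ge0}2^m\phi(2^{m-2}|Y|)$. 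I would therefore define $\beta$ to equal this series for $|Y|\ge2$ and a suitable constant for $|Y|\le2$ (the constant coming from the coarse-scale representation, which yields the global bound $\sup_{X,T}|K_0(X,T)|\lesssim1$ and keeps $\beta$ bounded), and then symmetrize it and make it nonincreasing on $[0,\infty)$.

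It remains to verify that this $\beta$ has the required properties, and here the hypotheses on $\phi$ enter. Boundedness is the near-diagonal bound just mentioned; evenness and monotonicity are built into the construction. Integrability is exactly condition~4: interchanging sum and integral, $\int_1^\infty\sum_{m\ge0}2^m\phi(2^{m-2}Y)\,dY=4\sum_{m\ge0}\int_{2^{m-2}}^\infty\phi$, and exchanging orders once more gives $4\int_0^\infty\phi(Z)\,\#\{m\ge0:2^{m-2}\le Z\}\,dZ\lesssim\int_0^\infty\phi(Z)\bigl(1+\log(1+Z)\bigr)\,dZ<\infty$, since the number of admissible scales grows like $\log Z$. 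Finally the dyadic scaling inequality follows from the self-similar form of the series: for $|Y|\ge1$ and $j\in\Z_+$, $\beta(2^jY)\approx\sum_{m\ge0}2^m\phi(2^{m-2+j}|Y|)=2^{-j}\sum_{m\ge j}2^m\phi(2^{m-2}|Y|)\le2^{-j}\beta(Y)$, and the extra factor $2^4$ absorbs the adjustments at the threshold $|Y|\le2$ and the passage between $\beta$ and its defining series. The main obstacle is precisely this final bookkeeping: fashioning one explicit $\beta$ that is simultaneously bounded, even, nonincreasing, integrable (using condition~4 in the sharp form above), and compatible with the dyadic scaling inequality, while keeping the inner-sum estimate uniform in $l$.
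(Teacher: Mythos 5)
The paper itself does not prove Fact~1 (it cites \cite{NPS}), so I am comparing your argument with the standard proof there. Your overall architecture is the right one and essentially the standard one: reduce by dilation to the kernel $K_0$ at scale $0$, bound it by an absolute constant near the interval using the coarse scales, and obtain decay away from the interval by switching to the fine scales, where the off-diagonal majorant $\sum_{m\ge 0}2^m\phi(2^{m-1}|X-T|)$ is integrable precisely because of condition~4 (your computation $\int_0^\infty\phi(Z)\,\#\{m:2^{m-2}\le Z\}\,dZ\lesssim\int_0^\infty\phi(Z)(1+\log(1+Z))\,dZ$ is the correct use of it), and the self-similar form of the series gives the dyadic inequality $\beta(2^jY)\lesssim 2^{-j}\beta(Y)$. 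The inner-sum estimate $\sum_k\phi(u-k)\phi(v-k)\lesssim\phi(|u-v|/2)$ via ``at each $k$ at least one factor sees half the separation'' is also correct and uniform in the lattice shift.

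The one genuine soft spot is the pivot of the whole argument: the replacement $K_0(X,T)=-\sum_{i\ge 0}\sum_k\Psi_{ik}(X)\Psi_{ik}(T)$ for $X\ne T$, which you justify by saying that ``the Dirac kernel vanishes off the diagonal.'' Completeness of $\{\Psi_{ik}\}$ is an $L^2$ statement about operators; it does not by itself say anything about the pointwise value of the bilateral series $\sum_{i\in\Z}\sum_k\Psi_{ik}(X)\Psi_{ik}(T)$, and absolute convergence of the two halves off the diagonal does not imply that their sum is zero. This identity can in fact be proved (test the absolutely convergent off-diagonal series against $g\otimes f$ with disjoint supports, use strong convergence of the partial-sum projections and dominated convergence to conclude the kernel vanishes a.e.\ off the diagonal), but you have not supplied that argument, and it is the only step on which the integrability of $\beta$ hinges. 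The standard and much simpler repair avoids any statement about the kernel of the identity: since $\supp f\subset I_{rl}$, for a.e.\ $x\notin I_{rl}$ one has $P_rf(x)=f(x)-Q_rf(x)=-Q_rf(x)$, and $Q_rf(x)=\sum_{i\ge r}\sum_k\langle f,\Psi_{ik}\rangle\Psi_{ik}(x)$ is then estimated directly through the absolutely convergent fine-scale kernel, exactly as in your computation. With that substitution (and the bookkeeping for $\beta$ near the origin, which you correctly identify as the source of the factor $2^4$), the proof is complete.
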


\begin{fact}
Let $f$ be a function whith $\supp f\subset I_{rl}$. Then there exists an even integrable function $\eta$ that decays on the interval $[10, \infty)$ and such that if $|2^rx-l|>10$ the following inequality holds:
$$
|U_\eps Q_r f(x)|\leq \|f\|_{L^1} 2^r \eta(2^rx-l).
$$
Here $\eta$ does not depend on $f$ and $\eps$.
\end{fact}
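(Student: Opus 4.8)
The plan is to bound $|U_\eps Q_r f(x)|$ by its termwise absolute value and to extract pointwise decay from the wavelet estimate $|\Psi(u)|\le\phi(u)$. Since $U_\eps Q_r f=\sum_{j\ge r}\sum_k \eps_{jk}\langle f,\Psi_{jk}\rangle\Psi_{jk}$ with $|\eps_{jk}|=1$, I would start from
\[
|U_\eps Q_r f(x)|\le\sum_{j\ge r}2^{j}\int_{I_{rl}}|f(t)|\Big(\sum_{k\in\Z}\phi(2^{j}t-k)\,\phi(2^{j}x-k)\Big)\,dt,
\]
using $|\langle f,\Psi_{jk}\rangle|\le 2^{j/2}\int|f(t)|\phi(2^jt-k)\,dt$ together with $|\Psi_{jk}(x)|\le 2^{j/2}\phi(2^jx-k)$. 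Every term is nonnegative, so Tonelli justifies the interchange, and the estimate simultaneously proves absolute convergence of the defining series.

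The first key step is the overlap sum. For fixed $j$ set $a=2^jt$, $b=2^jx$; I would establish the correlation bound $\sum_k\phi(a-k)\phi(b-k)\le 2C_\phi\,\phi(|a-b|/2)$, where $C_\phi:=\sup_{a}\sum_k\phi(a-k)<\infty$ (finite since $\phi$ is bounded, even, nonincreasing and, by condition 4, integrable). This follows by writing $\Z=A\cup B$ with $A=\{k:|a-k|\ge|a-b|/2\}$ and $B=\{k:|b-k|\ge|a-b|/2\}$ (their union is all of $\Z$ because $|a-k|+|b-k|\ge|a-b|$); on $A$ one bounds $\phi(a-k)\le\phi(|a-b|/2)$ and sums $\phi(b-k)$, and symmetrically on $B$. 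For $t\in I_{rl}$ and $w:=2^rx-l$ with $|w|>10$ one has $|a-b|=2^{j-r}|(2^rt-l)-w|\ge\tfrac{9}{10}2^{j-r}|w|$, so that, writing $m=j-r\ge0$, the estimate collapses to
\[
|U_\eps Q_r f(x)|\le 2C_\phi\,\|f\|_{L^1}\,2^{r}\sum_{m\ge0}2^{m}\phi\!\Big(\tfrac{9}{20}2^{m}|w|\Big).
\]
This is precisely $\|f\|_{L^1}2^r\eta(2^rx-l)$ with $\eta(w):=2C_\phi\sum_{m\ge0}2^m\phi(\tfrac{9}{20}2^m|w|)$, which is even and nonincreasing in $|w|$ by monotonicity of $\phi$, and independent of $f$ and $\eps$.

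The second key step, which I expect to be the main obstacle, is the integrability of $\eta$; this is exactly where condition 4 is needed. Substituting $v=\tfrac{9}{20}2^mw$ gives $\int_{10}^\infty\eta\lesssim C_\phi\sum_{m\ge0}\int_{c2^m}^\infty\phi$ with $c=9/2$, and the rearrangement $\sum_{m\ge0}\sum_{i\ge m}\int_{c2^i}^{c2^{i+1}}\phi=\sum_i(i+1)\int_{c2^i}^{c2^{i+1}}\phi$ produces the weight $i+1\asymp\log(1+v)$ on $[c2^i,c2^{i+1}]$, so that $\int_{10}^\infty\eta\lesssim\int_0^\infty\phi(v)\log(1+v)\,dv<\infty$. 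Pointwise finiteness of the series for $w\neq0$ follows from the cruder bound $2^m\phi(\tfrac{9}{20}2^mw)\lesssim|w|^{-1}\int_{\sim2^mw}\phi$, which sums to $\lesssim|w|^{-1}\|\phi\|_{L^1}$. Since the claimed inequality is only needed for $|w|>10$, I would finally redefine $\eta$ on $[-10,10]$ to equal the constant $\eta(10)$; this keeps $\eta$ even, nonincreasing on $[10,\infty)$, integrable on all of $\R$, and independent of $f$ and $\eps$, completing the argument.
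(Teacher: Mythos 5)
Your argument is correct: the termwise bound via $|\Psi|\le\phi$, the overlap estimate $\sum_k\phi(a-k)\phi(b-k)\le 2C_\phi\,\phi(|a-b|/2)$, the reduction to $\sum_{m\ge0}2^m\phi(\tfrac{9}{20}2^m|w|)$, and the dyadic rearrangement that converts the sum over scales into the weight $\log(1+v)$ (which is exactly where condition 4 on $\phi$ enters) all check out, as do the finiteness of $C_\phi$ and the pointwise finiteness of $\eta$. The paper does not prove this fact itself but cites \cite{NPS} (and \cite{Wojt}), where the proof proceeds along essentially the same lines, so your proposal reconstructs the standard argument rather than offering a genuinely different route.
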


We are going to need the following lemma which says that we can control $L^p$ norm of function $f_\lambda$.

\begin{lemma}
For any $1\leq p<\infty$ and any function $f\in L^1$ the following inequality holds:
$$
\Big\|\sum_{(r,l)\in S}P_r(f_{rl})\Big\|_{L^p}\lesssim \lambda^{1-1/p}\|f\|_{L^1}^{1/p}.
$$
\end{lemma}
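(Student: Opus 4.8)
The plan is to interpolate between the endpoints $p=1$ and $p=\infty$ for $g:=\sum_{(r,l)\in S}P_r(f_{rl})$. Everything rests on the pointwise estimate from Fact~1: since $\supp f_{rl}\subset I_{rl}$, we have $|P_r(f_{rl})(x)|\le 2^r\|f_{rl}\|_{L^1}\beta(2^rx-l)$, and since $(r,l)\in S$ forces $\|f_{rl}\|_{L^1}=\int_{I_{rl}}|f|\le 2\lambda|I_{rl}|=2\lambda 2^{-r}$, each term is controlled by $2\lambda\beta(2^rx-l)$. Hence
$$
|g(x)|\le\sum_{(r,l)\in S}2^r\|f_{rl}\|_{L^1}\,\beta(2^rx-l)\le 2\lambda\sum_{(r,l)\in S}\beta(2^rx-l).
$$
Integrating this and using $\int_\R\beta(2^rx-l)\,dx=2^{-r}\|\beta\|_{L^1}$ gives the $p=1$ bound at once: since the $f_{rl}$ have disjoint supports,
$$
\|g\|_{L^1}\le\sum_{(r,l)\in S}\|P_r(f_{rl})\|_{L^1}\le\|\beta\|_{L^1}\sum_{(r,l)\in S}\|f_{rl}\|_{L^1}\le\|\beta\|_{L^1}\|f\|_{L^1}.
$$

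Granting the remaining endpoint $\|g\|_{L^\infty}\lesssim\lambda$, the lemma follows for every $1\le p<\infty$ from the elementary inequality
$$
\|g\|_{L^p}^p=\int_\R|g|^p\le\|g\|_{L^\infty}^{p-1}\|g\|_{L^1}\lesssim\lambda^{p-1}\|f\|_{L^1},
$$
that is, $\|g\|_{L^p}\lesssim\lambda^{1-1/p}\|f\|_{L^1}^{1/p}$. Thus the whole problem reduces to the uniform estimate $\sum_{(r,l)\in S}\beta(2^rx-l)\lesssim 1$ for a.e.\ $x$, and this is the step I expect to be the main obstacle.

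At a single scale $r$ the bound $\sum_{l}\beta(2^rx-l)\lesssim\|\beta\|_{L^\infty}+\|\beta\|_{L^1}$ is easy from the monotonicity of $\beta$, by comparing the sum over the distinct integers $l$ with an integral; but summing this uniform bound over all scales $r$ diverges, so the essential point is to exploit the interaction between scales. Some structure beyond disjointness is genuinely required: for an \emph{arbitrary} family of disjoint dyadic intervals the sum can diverge at a point (take $I_{r,1}=[2^{-r},2^{-r+1}]$ accumulating at $0$, each contributing $\beta(1)$), so I cannot rely on disjointness alone. What rescues the estimate is the maximality of the Calder\'on--Zygmund intervals: every dyadic $J\ni x$ with $x\notin\bigcup I_{rl}$ satisfies $\frac{1}{|J|}\int_J|f|\le\lambda$. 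The plan is to organize the sum according to the smallest dyadic interval $J$ containing both $x$ and a given $I_{rl}$; for fixed $J$ the relevant intervals lie in the child of $J$ that does not contain $x$, their total mass is $\le\int_J|f|\le\lambda|J|$, and the dilation bound $\beta(2^jt)\le 2^{4-j}\beta(t)$ turns the scale gap between $I_{rl}$ and $J$ into geometric decay of $\beta(2^rx-l)$. Carrying out the resulting summation over the dyadic ancestors of $x$ is the delicate part, since the series sits at the borderline of convergence and must be controlled by combining the dilation bound with the integrability and monotonicity of $\beta$ (ultimately the logarithmic decay hypothesis, condition~4, on the majorant $\phi$). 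The accumulation example shows precisely why maximality cannot be dropped, and I expect this cross-scale bookkeeping to be the hardest part of the argument.
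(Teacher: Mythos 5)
Your reduction to the two endpoints and your $L^1$ bound are fine, but the argument breaks at exactly the step you flagged: the uniform estimate $\sum_{(r,l)\in S}\beta(2^rx-l)\lesssim 1$ is false, and the maximality of the Calder\'on--Zygmund intervals does not rescue it. Take $f=\tfrac54\lambda\sum_{j=1}^N\chi_{[2^{-2j},\,2^{-2j+1}]}$. Each $I_{2j,1}=[2^{-2j},2^{-2j+1}]$ has average exactly $\tfrac54\lambda\in(\lambda,2\lambda]$, while every dyadic ancestor of any $I_{2j,1}$ has the form $[0,2^{-m}]$ and carries mass at most $\tfrac53\lambda\,2^{-2\lceil(m+1)/2\rceil}\le\tfrac56\lambda\,2^{-m}$, hence average below $\lambda$; so $S=\{(2j,1)\}_{j=1}^N$ is a genuine Calder\'on--Zygmund family (skipping every other scale is what defeats the ``children overfill the parent'' obstruction that ruled out your accumulation example). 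At $x=0$ the sum equals $N\beta(1)$, which is unbounded in $N$. Thus your route to the endpoint $\|g\|_{L^\infty}\lesssim\lambda$ cannot be completed by the cross-scale bookkeeping you describe --- and you do not in fact carry that bookkeeping out, so the proof is incomplete at its decisive step independently of the counterexample.

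This is precisely why the paper states the lemma only for $p<\infty$ and never goes near an $L^\infty$ endpoint. Its proof first reduces to integer $p$ by interpolation, expands the $p$-th power, and establishes by induction on $k$ the weighted integral estimate $\int_{\R}\beta(t)\bigl(\sum_{(r,l)\in S',\,r\ge0}\beta(2^rt-l)\bigr)^k\,dt\le C_k$, valid for an \emph{arbitrary} family of disjoint dyadic intervals --- maximality is never used. The integration against $\beta(t)\,dt$ is what saves the estimate: in the example above the inner sum is large only on a set of exponentially small measure near the origin, so the integral stays bounded even though the supremum does not. To salvage an interpolation-flavoured argument you would have to replace your pointwise endpoint by such an integrated statement, which is essentially what the paper's induction already is.
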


We note that in \cite{NPS} and \cite{Wojt} this statement is proved only for $p=2$. However, our proof will be much like the proof in book \cite{NPS}.

\begin{proof}
At first we note that it is enough to prove the statement of lemma for integer values of $p$ --- in this case we can derive the required bound using interpolation (or simply H\" older's inequality). Thus we need to prove the inequality
$$
\int_{\R}\Big| \sum_{(r,l)\in S}P_r(f_{rl}) \Big|^{p}\lesssim \lambda^{p-1}\|f\|_{L^1}
$$
where $p$ is an integer not less than 1.

According to fact 1, the left hand side of this inequality does not exceed
\begin{align*}
  &\int_{\R}\Big| \sum_{(r,l)\in S} 2^r \|f_{rl}\|_{L^1}\beta(2^rx-l)  \Big|^p dx\\
  &\lesssim \sum_{(r_1,l_1)\in S}2^{r_1}\|f_{r_1l_1}\|_{L^1}\int_{\R}\beta(2^{r_1}x-l_1)\Big| \sum_{(r,l)\in S,r\geq r_1}2^r
  \|f_{rl}\|_{L^1}\beta(2^rx-l) \Big|^{p-1}dx.
\end{align*} 
 Using the fact that $2^r\|f_{rl}\|_{L^1}\leq 2\lambda$ we see that this expression is bounded by the following:
 $$
 \lambda^{p-1}\sum_{(r_1,l_1)\in S}\|f_{r_1l_1}\|_{L^1}2^{r_1}\int_{\R}\beta(2^{r_1}x-l_1)
 \Big| \sum_{(r,l)\in S,r\geq r_1}\beta(2^rx-l) \Big|^{p-1} dx.
 $$
 Changing the variable in the integral, we can write this expression in the following way:
$$
 \lambda^{p-1}\sum_{(r_1,l_1)\in S}\|f_{r_1l_1}\|_{L^1} \int_{\R}\beta(t)
 \Big| \sum_{(r,l)\in S, r\geq r_1} \beta(2^{r-r_1}t-(l-2^{r-r_1}l_1)) \Big|^{p-1}dt.
$$
For any fixed pair $(r_1, l_1)\in S$ we denote by $S'$ the set of pairs $\{(r-r_1, l-2^{r-r_1}l_1) : (r, l)\in S\}$. It is easy to see that $\{I_{rl}\}_{(r,l)\in S'}$ are also dyadic intervals with nonintersecting interiors. So we need to estimate the following expression:
$$
 \lambda^{p-1}\sum_{(r_1,l_1)\in S}\|f_{r_1l_1}\|_{L^1} \int_{\R}\beta(t)
 \Big| \sum_{(r,l)\in S', r\geq 0} \beta(2^rt-l)\Big|^{p-1} dt.
$$
Now we prove that the integral in this expression is bounded by constant which does not depend on $S'$. Clearly the statement of lemma will follow immediately. So it is left to prove that for every $k\in\Z_+$ the following inuequality holds with constant 
$C$ depending on $k$ but not on $S'$:
$$
 \int_{\R} \beta(t)\Big( \sum_{(r,l)\in S', r\geq 0}\beta(2^rt-l) \Big)^k dt\leq C.
$$
We prove this by induction in $k$. The inequality is obvious for $k=0$ since $\beta$ is an integrable function. Now assume this inequality holds for $k-1$ and we prove that it holds also for $k$. Note that
\begin{align*}
 &\int_{\R} \beta(t)\Big( \sum_{(r,l)\in S', r\geq 0}\beta(2^rt-l) \Big)^k dt\\
 &\lesssim\sum_{(r,l)\in S', r\geq 0} \int_{\R}\beta(t)\beta(2^rt-l)\Big(\sum_{(r_1,l_1)\in S', r_1\geq r}
 \beta(2^{r_1}t-l_1)\Big)^{k-1}dt.
\end{align*}
Let us denote by $S_{nr}$ the set $\{l: (r,l)\in S', I_{rl}\subset [n,n+1]\}$ and by $\varkappa_{nr}$ --- the cardinality of $S_{nr}$. Since $r\geq 0$, every interval $I_{rl}$ is contained in the interval of the form $[n, n+1]$ with integer $n$, so we can rewrite our expression in the following way:
$$
\sum_{n\in\Z}\sum_{r=0}^{\infty}\sum_{l\in S_{nr}}\int_{\R}
 \beta(t)\beta(2^rt-l)\Big( \sum_{(r_1,l_1)\in S', r_1\geq r} \beta(2^{r_1}t-l_1) \Big)^{k-1}dt.
$$
Now for any integer $n$ we can split our integral into three parts:
\begin{align*}
 J_{n1}&:=\sum_{r=0}^{\infty}\sum_{l\in S_{nr}}\int_{n-10}^{n+10}
 \beta(t)\beta(2^rt-l)\Big( \sum_{(r_1,l_1)\in S', r_1\geq r} \beta(2^{r_1}t-l_1) \Big)^{k-1}dt,\\
 J_{n2}&:=\sum_{r=0}^{\infty}\sum_{l\in S_{nr}}\int_{-\infty}^{n-10}
 \beta(t)\beta(2^rt-l)\Big( \sum_{(r_1,l_1)\in S', r_1\geq r} \beta(2^{r_1}t-l_1) \Big)^{k-1}dt,\\
 J_{n3}&:=\sum_{r=0}^{\infty}\sum_{l\in S_{nr}}\int_{n+10}^{+\infty}
 \beta(t)\beta(2^rt-l)\Big( \sum_{(r_1,l_1)\in S', r_1\geq r} \beta(2^{r_1}t-l_1) \Big)^{k-1}dt.
\end{align*}
Now we estimate each of these terms separately. We start with $J_{n1}$:
\begin{align*}
 J_{n1}&\leq(\max_{[n-10,n+10]}\beta) \sum_{r=0}^{\infty}\sum_{l\in S_{nr}}\int_{\R}
 \beta(2^rt-l)\Big( \sum_{(r_1,l_1)\in S', r_1\geq r} \beta(2^{r_1}t-l_1) \Big)^{k-1} dt \\
 &=(\max_{[n-10,n+10]}\beta) \sum_{r=0}^{\infty}\sum_{l\in S_{nr}}2^{-r}\int_{\R}
 \beta(t)\Big( \sum_{(r_2,l_2)\in S'', r_2\geq 0} \beta(2^{r_2}t-l_2) \Big)^{k-1}dt.
\end{align*}
In order to pass to the last line we used the change of variable which we have already done before. Here $S''$ is the set of pairs of integers depending on $(r, l)$ but it is true for it that $\{I_{r_2l_2}\}_{(r_2, l_2)\in S''}$ are non-intersecting intervals.  Using induction hypothesis we conclude that the integral in the expression does not exceed some constant which does not depend on $(r,l)$ and thus our expression is less than or equal to 
$$
C\max_{[n-10,n+10]}\beta\sum_{r=0}^{\infty}2^{-r}\varkappa_{nr}.
$$
Here $\sum_{r=0}^{\infty}2^{-r}\varkappa_{nr}$ is the sum of lengths of nonintersecting intervals contained in $[n, n+1]$ and so it does not exceed 1. We conclude that
$$
J_{n1}\lesssim\max_{[n-10,n+10]}\beta.
$$
Using the fact that $\beta$ is a decaying on $[0, +\infty]$ even integrable function we conclude:
$$
\sum_{n\in\Z}J_{n1}\lesssim\sum_{n\in\Z}\max_{[n-10,n+10]}\beta\leq C.
$$

Now we estimate $J_{n2}$. If $l\in S_{nr}$, then $I_{rl}\subset [n, n+1]$ and so $2^{-r}l \geq n$. So if $t< n-10$ then
$2^rt-l=2^r(t-2^{-r}l)\leq 2^r(t-n)<0$. Using the properties of $\beta$ from fact 1 we can conclude that the following inequality holds:
\begin{align*}
 J_{n2}&\leq \sum_{r=0}^{\infty}\sum_{l\in S_{nr}} \int_{-\infty}^{n-10}
 \beta(t)\beta(2^{r}(t-n))\Big( \sum_{(r_1,l_1)\in S', r_1\geq 0} \beta(2^{r_1}t-l_1) \Big)^{k-1}dt\\
 &\lesssim\sum_{r=0}^{\infty}2^{-r}\varkappa_{nr}\int_{-\infty}^{n-10}
 \beta(t)\beta(t-n)\Big( \sum_{(r_1,l_1)\in S', r_1\geq 0} \beta(2^{r_1}t-l_1) \Big)^{k-1}dt.
\end{align*}
As we already mentioned, $\sum_{r=0}^{\infty}2^{-r}\varkappa_{nr}\leq 1$. Then, using monotonicity and integrability of function $\beta$ we see that $\sum_{n\in\Z}\beta(t-n)$ is a uniformly bounded function and we get the following estimate:
$$
\sum_{n\in\Z}J_{n2}\lesssim \int_{\R}\beta(t) \Big( \sum_{(r_1,l_1)\in S', r_1\geq 0}\beta(2^{r_1}t-l_1) \Big)^{k-1}dt.
$$
Using the induction hypothesis we see that the right hand side is bounded by some constant. The term $\sum_{n\in\Z}J_{n3}$ is estimated in exactly the same way --- if $t\geq n+10$ and $I_{rl}\subset [n, n+1]$, then $2^{-r}l\leq n+1$ and 
$2^rt-l=2^r(t-2^{-r}l)\geq 2^r(t-n-1)>0$ and the estimates similar to that we have done above show that $\sum_{n\in\Z}J_{n3}\leq C$ and the lemma is proved.
\end{proof}

Clearly, since on the set $F$ the inequality $|f|\leq\lambda$ holds, the lemma we just proved implies the inequality
$$
\|f_\lambda\|_{L^p}\lesssim \lambda^{1-1/p} \|f\|_{L^1}^{1/p}.
$$

\subsection{Stability theorem for couple $(L^1, L^p)$}
Now we pass to the proof of the stability theorem. Here $T$ will denote the projection on $\spn\{\Psi_{jk}: (j,k)\in A\}$ described previously although any operator bounded on $L^p$ and for which fact 2 holds would suit us (every such operator is of weak type $(1,1)$). In this situation the analogue of theorem 1 is true.

\begin{theorem}
Let $T$ be as above, $1<p<\infty$ and $f\in L^1$ is a function for which $Tf\in L^1$. Then for any $s>0$ there exists
such function $u^{(s)}\in L^1$ that the following conditions hold:
\begin{align}
   \|u^{(s)}\|_{L^p}\lesssim& s, \label{1st}\\
   \|f-u^{(s)}\|_{L^1}\lesssim& \dist_{L^1}(f,B_{L^p}(s)), \label{2nd}\\
   \|Tf-Tu^{(s)}\|_{L^1}\lesssim& \dist_{L^1}(f,B_{L^p}(s))+\dist_{L^1}(Tf,B_{L^p}(s)). \label{3rd}
  \end{align}
\end{theorem}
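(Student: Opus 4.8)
The plan is to run Bourgain's argument: start from an arbitrary near-minimizer for $f$ and turn it into a stable one by adding the ``good part'' of the decomposition of Section~2.1 applied to the error. Write $D_1:=\dist_{L^1}(f,B_{L^p}(s))$ and $D_2:=\dist_{L^1}(Tf,B_{L^p}(s))$; we may assume $D_1>0$, the case $D_1=0$ being trivial (then $f\in B_{L^p}(s)$ and $u^{(s)}=f$ works). First I would pick $g$ with $\|g\|_{L^p}\le s$ and $\|f-g\|_{L^1}\le 2D_1$, set $h:=f-g$ so that $\|h\|_{L^1}\lesssim D_1$, and apply the decomposition to $h$ at a level $\lambda$ to be fixed, obtaining the good part $h_\lambda=h\chi_F+\sum_{(r,l)\in S}P_r(h_{rl})$ and the bad part $b:=h-h_\lambda=\sum_{(r,l)\in S}Q_r(h_{rl})$. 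I then define
$$
u^{(s)}:=g+h_\lambda,\qquad\text{so that}\qquad f-u^{(s)}=b .
$$

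The two easy conditions fall out immediately. For \eqref{1st}, the Lemma (in the form $\|h_\lambda\|_{L^p}\lesssim\lambda^{1-1/p}\|h\|_{L^1}^{1/p}$ recorded just after its proof) gives $\|u^{(s)}\|_{L^p}\le\|g\|_{L^p}+\|h_\lambda\|_{L^p}\lesssim s+\lambda^{1-1/p}D_1^{1/p}$. For \eqref{2nd}, Fact 1 yields $\|P_r h_{rl}\|_{L^1}\le\|\beta\|_{L^1}\|h_{rl}\|_{L^1}$, so $\|f-u^{(s)}\|_{L^1}=\|b\|_{L^1}\le\sum(\|h_{rl}\|_{L^1}+\|P_r h_{rl}\|_{L^1})\lesssim\|h\|_{L^1}\lesssim D_1$, which is \eqref{2nd}. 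This already dictates the choice $\lambda\sim s^{p/(p-1)}/D_1^{1/(p-1)}$, for which $\lambda^{1-1/p}D_1^{1/p}\sim s$ and hence \eqref{1st} holds.

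The heart of the matter is \eqref{3rd}, that is, estimating $\|Tb\|_{L^1}$. I would introduce $\Omega^*:=\bigcup_{(r,l)\in S}\tilde I_{rl}$, the union of the dilates $\tilde I_{rl}=\{x:|2^rx-l|\le 10\}$. The stopping inequality $\lambda<|I_{rl}|^{-1}\int_{I_{rl}}|h|$ gives $\sum|I_{rl}|\le\lambda^{-1}\|h\|_{L^1}$, whence $|\Omega^*|\lesssim D_1/\lambda$. Off $\Omega^*$ the pointwise decay is available: for $x\notin\Omega^*$ one has $|2^rx-l|>10$ for all $(r,l)\in S$, and the analogue of Fact 2 for $T$ (valid since $T=(\Id+U_\eps)/2$ and off $\supp h_{rl}$ the identity part is just $-P_r h_{rl}$, controlled by Fact 1) gives $|TQ_r h_{rl}(x)|\lesssim 2^r\|h_{rl}\|_{L^1}\bigl(\beta(2^rx-l)+\eta(2^rx-l)\bigr)$. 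Summing and integrating over $\R$ yields $\int_{\R\setminus\Omega^*}|Tb|\lesssim(\|\beta\|_{L^1}+\|\eta\|_{L^1})\sum\|h_{rl}\|_{L^1}\lesssim D_1$.

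The remaining and genuinely delicate piece is the integral over $\Omega^*$, where no pointwise decay can help. Here I would bring in a near-minimizer $\psi$ for $Tf$, with $\|\psi\|_{L^p}\le s$ and $\|Tf-\psi\|_{L^1}\le 2D_2$, and split $Tf-Tu^{(s)}=(Tf-\psi)+(\psi-Tu^{(s)})$ on $\Omega^*$. The first summand contributes at most $\|Tf-\psi\|_{L^1}\lesssim D_2$. For the second, boundedness of $T$ on $L^p$ together with \eqref{1st} gives $\|\psi-Tu^{(s)}\|_{L^p}\lesssim s$, so by H\"older $\int_{\Omega^*}|\psi-Tu^{(s)}|\le|\Omega^*|^{1-1/p}\|\psi-Tu^{(s)}\|_{L^p}\lesssim(D_1/\lambda)^{1-1/p}s$. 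With the balanced choice $\lambda\sim s^{p/(p-1)}/D_1^{1/(p-1)}$ one computes $(D_1/\lambda)^{1-1/p}\sim D_1/s$, so this term is $\lesssim D_1$; altogether $\int_{\Omega^*}|Tf-Tu^{(s)}|\lesssim D_1+D_2$, and combining with the estimate off $\Omega^*$ gives \eqref{3rd}. I expect the main obstacle to be exactly this balancing of the single parameter $\lambda$, which must simultaneously control the $L^p$-size of the good part through the Lemma and the measure of $\Omega^*$ weighed against the $L^p$-norm of $Tu^{(s)}$; once $\lambda$ is chosen, every remaining estimate reduces to Facts 1 and 2.
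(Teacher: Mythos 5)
Your proposal is correct and follows essentially the same route as the paper: the same Bourgain-type correction $u^{(s)}=g+(f-g)_\lambda$ with $\lambda$ balanced so that $\lambda^{1-1/p}D_1^{1/p}\sim s$, the same three-way split of $\|Tf-Tu^{(s)}\|_{L^1}$ over and off the dilated stopping intervals, Fact 2 (together with Fact 1 for the identity part) off the exceptional set, and H\"older plus $L^p$-boundedness of $T$ on it. The only cosmetic differences are notation and your slightly more explicit justification that the pointwise decay estimate applies to $T=(\Id+U_\eps)/2$ and not just to $U_\eps$.
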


\begin{proof}
Let $h$ be any near-minimizer such that $\|h\|_{L^p}\leq s$ and $\|f-h\|_{L^1}\leq 2\dist_{L^1}(f,B_{L^p}(s))$. Then we set $u^{(s)}:=h+(f-h)_t$ where $t$ satisfies the condition $t^{p-1}\|f-h\|_{L^1}=s^p$. We remind the reader that here by $(f-h)_t$ we understand the "good" part of the decomposition which is described previously applied to the function $f-h$ and the number $t$. Now we check that $u^{(s)}$ is also a near-minimizer which means that conditions \eqref{1st} and \eqref{2nd} hold for it. The inequality \eqref{2nd} follows immediately from the fact that according to the lemma we proved $\|(f-h)_t\|_{L^1}\lesssim \|f-h\|_{L^1}$. In order to prove the condition \eqref{1st}, it is enough to check that $\|(f-h)_t\|_{L^p}\lesssim s$. But using our choise of $t$ and lemma 1 once again we can write: $\|(f-h)_t\|_{L^p}\lesssim t^{1-1/p}\|f-h\|_{L^1}^{1/p}=s$.

It is left to check the condition \eqref{3rd}. In order to do it we choose a function $v\in L^1$ which is a near-minimizer for $Tf$: $\|v\|_{L^p}\leq s$, $\|Tf-v\|_{L^1}\leq 2\dist_{L^1}(Tf, B_{L^p}(s))$. Let $\{I_{rl}\}_{(r,l)\in S}$ be the set of dyadic intervals arising in the construction of function $(f-h)_t$. Then the following estimate holds:
$$
 \sum_{(r,l)\in S}|I_{rl}|\leq t^{-1}\|f-h\|_{L^1}=\Big(\frac{\|f-h\|_{L^1}}{s}\Big)^{p'}
 \lesssim \Big(\frac{\dist_{L^1}(f, B_{L^p}(s))}{s}\Big)^{p'}.
$$
Here $p'=\frac{p}{p-1}$. Now we write:
\begin{equation}
 \|Tf-Tu^{(s)}\|_{L^1}\leq \int_{\R\setminus\cup 30 I_{rl}} |Tf-Tu^{(s)}| + \int_{\cup 30 I_{rl}}|Tf-v| +
 \int_{\cup 30 I_{rl}}|Tu^{(s)}-v|.
 \label{eq1}
 \end{equation}
 Let us estimate the first summand. We note that it can be written in the following way:
 \begin{align*}
 &\int_{\R\setminus\cup 30 I_{rl}} |Tf-Tu^{(s)}|=\int_{\R\setminus\cup 30 I_{rl}}|T((f-h)-(f-h)_t)|\\
 =&\int_{\R\setminus\cup 30 I_{rl}} \Big|T \Big(\sum_{(r,l)\in S}Q_r((f-h)_{rl})\Big)\Big|\leq
 \sum_{(r,l)\in S}\int_{\R\setminus 30 I_{rl}} |T(Q_r((f-h)_{rl}))|dx.
 \end{align*}
 According to fact 2 this expression can be bounded by the following:
 \begin{align*}
 &\sum_{(r,l)\in S}\int_{\R\setminus 30 I_{rl}} \|(f-h)_{rl}\|_{L^1} 2^r\eta(2^rx-l)dx\\
 \leq &\sum_{(r,l)\in S} \|(f-h)_{rl}\|_{L^1} \int_\R\eta(x) dx\lesssim\|f-h\|_{L^1}.
 \end{align*}
Due to our choice of $h$ this expsession is less than or equal to $2\dist_{L^1}(f,B_{L^p}(s))$.

The second summand in \eqref{eq1} is obviously less than or equal to $\|Tf-v\|_{L^1}\leq 2\dist_{L^1}(Tf,B_{L^p}(s))$. In order to estimate the third one we use the H\" older's inequality and conclude that it does not exceed the following expresion:
$$
 \|Tu^{(s)}-v\|_{L^p}\Big(\sum_{(r,l)\in S}|30 I_{rl}|\Big)^{1/p'}\lesssim
 (\|Tu^{(s)}\|_{L^p}+\|v\|_{L^p})\Big(\frac{\|f-h\|_{L^1}}{t}\Big)^{1/p'}.
$$
Using the boundedness of $T$ on $L^p$ we get that the third summand in the right hand side of the inequality \eqref{eq1} is estimated by
$$
 s\Big(\frac{\|f-h\|_{L^1}}{t}\Big)^{1/p'}=\|f-h\|_{L^1}\leq 2\dist_{L^1}(f,B_{L^p}(s)).
$$
So we checked that the property \eqref{3rd} holds and the theorem is proved.
\end{proof}

Now we turn to some corollaries of the theorem we just proved.
\begin{corollary}
 Suppose $1<p<\infty$, $T$ is an operator from the theorem and $f\in L^1$ is a function for which $Tf\in L^1$. Then there exists a sequence of functions $f_k\in L^1\cap L^p$ tending to $f$ in $L^1$ for which $Tf_k\in L^1$ and $\|Tf_k-Tf\|_{L^1}\rightarrow 0$.
\end{corollary}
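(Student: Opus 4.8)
The plan is to obtain the sequence directly from Theorem~2 by letting the scale parameter tend to infinity. Concretely, for each $k\in\Z_+$ I would apply Theorem~2 with $s=k$ and set $f_k:=u^{(k)}$. The first inequality of the theorem then gives $\|f_k\|_{L^p}\lesssim k<\infty$, and since $u^{(k)}\in L^1$ by construction we obtain $f_k\in L^1\cap L^p$. Moreover $T$ is bounded on $L^p$, so $Tf_k\in L^p$; and the third inequality shows $\|Tf-Tf_k\|_{L^1}<\infty$, whence $Tf_k=Tf-(Tf-Tf_k)\in L^1$ because $Tf\in L^1$ by hypothesis. Thus each $f_k$ already satisfies all the required membership properties, and only the two convergences remain.

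By the second and third inequalities of Theorem~2 (with constants independent of $k$),
\begin{align*}
\|f-f_k\|_{L^1}&\lesssim \dist_{L^1}(f,B_{L^p}(k)),\\
\|Tf-Tf_k\|_{L^1}&\lesssim \dist_{L^1}(f,B_{L^p}(k))+\dist_{L^1}(Tf,B_{L^p}(k)).
\end{align*}
Hence both statements reduce to a single assertion: for every $g\in L^1$ one has $\dist_{L^1}(g,B_{L^p}(s))\to 0$ as $s\to\infty$. I would apply this to $g=f$ and to $g=Tf$, both of which lie in $L^1$ by assumption.

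This last assertion is the only substantive point, and it follows from the density of $L^1\cap L^p$ in $L^1$. Given $\eps>0$, I would choose $g_0\in L^1\cap L^p$ (for instance a bounded function of compact support) with $\|g-g_0\|_{L^1}<\eps$; then for every $s\geq\|g_0\|_{L^p}$ we have $g_0\in B_{L^p}(s)$, and therefore $\dist_{L^1}(g,B_{L^p}(s))\leq\|g-g_0\|_{L^1}<\eps$. Letting $s\to\infty$ and then $\eps\to 0$ gives $\dist_{L^1}(g,B_{L^p}(s))\to 0$. Substituting $g=f$ and $g=Tf$ into the displayed inequalities yields $\|f-f_k\|_{L^1}\to 0$ and $\|Tf_k-Tf\|_{L^1}\to 0$, which is exactly the claim. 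I do not expect a genuine obstacle here: the entire force of the corollary is carried by Theorem~2, and the remaining ingredient—vanishing of the $E$-functional as $s\to\infty$—is a soft density fact that holds for any $g\in L^1$.
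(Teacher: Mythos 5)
Your proof is correct and follows exactly the paper's route: apply Theorem~2 with $s=k$, take $f_k=u^{(k)}$, and observe that the right-hand sides of \eqref{2nd} and \eqref{3rd} tend to zero as $s\to\infty$ by density of $L^1\cap L^p$ in $L^1$. You merely spell out the details (the membership checks and the vanishing of the distance functional) that the paper leaves implicit.
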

\begin{proof}
This statement immediately follows from the theorem if we tend $s$ to infinity (in this case since $L^1\cap L^p$ is dense in $L^1$ the right hand sides of inequalities \eqref{2nd} and \eqref{3rd} tend to zero).
\end{proof}

We note that if $T$ is a projection described previously and $E$ is a measurable subset of $\R$ then $\chi_E T$ is of course a bounded operator on $L^p$ and fact 2 holds for it. So we have the following generalization of the previous corollary.
\begin{corollary}
Suppose $1<p<\infty$, $T$ is an operator from the theorem, $f$ is a function from $L^1$ and set $E\subset\R$ is such that 
$\chi_E Tf\in L^1$. Then there exist functions $f_k\in L^1\cap L^p$ tending to $f$ in $L^1$ for which $\chi_E Tf_k\in L^1$ and 
$\chi_E Tf_k\rightarrow \chi_E Tf$ in $L^1$.
\end{corollary}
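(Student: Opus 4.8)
The plan is to apply the preceding theorem (Theorem 2) not to $T$ but to the truncated operator $\chi_E T$, exactly as the previous corollary applied it to $T$. The remark just before the statement records that $\chi_E T$ is bounded on $L^p$; moreover Fact 2 holds for it, since $|\chi_E|\leq 1$ gives $|\chi_E T(Q_r g)(x)|\leq|T(Q_r g)(x)|\leq\|g\|_{L^1}2^r\eta(2^rx-l)$ whenever $\supp g\subset I_{rl}$ and $|2^rx-l|>10$. As was pointed out before the statement of Theorem 2, boundedness on $L^p$ together with Fact 2 are the only properties of the operator that its proof uses, so Theorem 2 holds verbatim with $T$ replaced by $\chi_E T$. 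The single remaining hypothesis, that the operator sends $f$ into $L^1$, is precisely the assumption $\chi_E Tf\in L^1$.

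First I would invoke this version of Theorem 2 for the operator $\chi_E T$ and the given $f$, obtaining for every $s>0$ a function $u^{(s)}$ with $\|u^{(s)}\|_{L^p}\lesssim s$, with $\|f-u^{(s)}\|_{L^1}\lesssim\dist_{L^1}(f,B_{L^p}(s))$, and with $\|\chi_E Tf-\chi_E Tu^{(s)}\|_{L^1}\lesssim\dist_{L^1}(f,B_{L^p}(s))+\dist_{L^1}(\chi_E Tf,B_{L^p}(s))$. Since $f\in L^1$ and the second estimate forces $f-u^{(s)}\in L^1$, we have $u^{(s)}\in L^1\cap L^p$; and since $\chi_E Tf\in L^1$, the third estimate likewise gives $\chi_E Tu^{(s)}\in L^1$.

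Next I would let $s\to\infty$ along a sequence $s_k$. Because $L^1\cap L^p$ is dense in $L^1$ and both $f$ and $\chi_E Tf$ belong to $L^1$, the two distance functionals $\dist_{L^1}(f,B_{L^p}(s))$ and $\dist_{L^1}(\chi_E Tf,B_{L^p}(s))$ decrease to $0$ as $s\to\infty$. Hence $\|f-u^{(s_k)}\|_{L^1}\to 0$ and $\|\chi_E Tf-\chi_E Tu^{(s_k)}\|_{L^1}\to 0$, so the choice $f_k:=u^{(s_k)}$ furnishes the required sequence.

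There is no substantial obstacle here: the entire argument is the one already used for the previous corollary, now run with $\chi_E T$ in place of $T$. The only points needing a moment of care are the verification that Fact 2 survives the truncation by $\chi_E$ (immediate from $|\chi_E|\leq 1$) and the density fact that makes both distance functionals vanish in the limit, which is the same one invoked in the preceding corollary.
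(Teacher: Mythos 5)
Your proof is correct and follows exactly the paper's route: the paper likewise applies Theorem 2 to the operator $\chi_E T$ (after noting that $\chi_E T$ is bounded on $L^p$ and that Fact 2 survives the truncation) and then repeats the limiting argument of the previous corollary. Your write-up merely spells out the details the paper leaves implicit.
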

\begin{proof}
It is enough to use the theorem for operator $\chi_E T$ and then repeat the proof of the previous corollary.
\end{proof}

Using the first corollary it is easy to see that if a function from $L^1$ has some of the wavelet coefficients equal to zero then it can be approximated by functions from $L^1\cap L^p$ for which the same coefficients are also zero. Here is the precise statement of this fact.
\begin{corollary}
Suppose $1<p<\infty$ and $f\in L^1$. Then there exist functions $g_k\in L^1\cap L^p$ tending to $f$ in $L^1$ such that if $\langle f, \Psi_{rl}\rangle = 0$ then $\langle g_k, \Psi_{rl}\rangle = 0$.
\end{corollary}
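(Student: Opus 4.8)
The plan is to choose the projection $T$ so that $f$ is already a fixed point of it, and then feed $f$ into the first corollary. Concretely, let $Z:=\{(r,l)\in\Z^2:\langle f,\Psi_{rl}\rangle=0\}$ be the set of indices on which the wavelet coefficients of $f$ vanish, put $A:=\Z^2\setminus Z$, and let $T$ be the orthogonal projection on $\spn\{\Psi_{jk}:(j,k)\in A\}$; this is exactly an operator of the kind allowed in Theorem~2 and in the first corollary. Since the coefficients of $f$ are supported on $A$, the key identity should be $Tf=f$, so that in particular $Tf=f\in L^1$ and the hypothesis of the first corollary is satisfied for this $T$ and this $f$.

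Granting $Tf=f$, I would apply the first corollary to produce functions $f_k\in L^1\cap L^p$ with $f_k\to f$ in $L^1$, $Tf_k\in L^1$, and $\|Tf_k-Tf\|_{L^1}\to 0$, and then set $g_k:=Tf_k$. Because $f_k\in L^p$ and $T$ is bounded on $L^p$, we have $g_k\in L^p$, while $Tf_k\in L^1$ gives $g_k\in L^1$; hence $g_k\in L^1\cap L^p$. For the vanishing of the prescribed coefficients, note that $g_k$ lies in $\spn\{\Psi_{jk}:(j,k)\in A\}$, and for $(r,l)\in Z$ and $(j,k)\in A$ orthonormality gives $\langle\Psi_{jk},\Psi_{rl}\rangle=0$; thus every finite combination of the $\Psi_{jk}$ with $(j,k)\in A$ has vanishing $\Psi_{rl}$-coefficient, and since $\Psi_{rl}$ is bounded and integrable (so $\Psi_{rl}\in L^{p'}$) the functional $\langle\,\cdot\,,\Psi_{rl}\rangle$ is continuous on $L^p$ and this vanishing persists in the limit. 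Therefore $\langle g_k,\Psi_{rl}\rangle=0$ for every $(r,l)\in Z$, and finally $\|g_k-f\|_{L^1}=\|Tf_k-Tf\|_{L^1}\to 0$ by the identity $Tf=f$, which is the desired approximation.

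The one step requiring genuine care, and the main obstacle, is the identity $Tf=f$: on $L^1$ the projection $T$ is only of weak type $(1,1)$, so $Tf$ must be understood through its weak-type extension rather than as a literal convergent wavelet series. I would prove it by showing $(\Id-T)f=0$, where $\Id-T$ is the companion projection onto $\spn\{\Psi_{jk}:(j,k)\in Z\}$, again of weak type $(1,1)$. By construction the $\Psi_{jk}$-coefficients of $(\Id-T)g$ vanish for $(j,k)\in A$ and coincide with those of $g$ for $(j,k)\in Z$; for $g=f$ the latter vanish by the definition of $Z$, so all wavelet coefficients of $(\Id-T)f$ are formally zero. Turning this into the honest equality $(\Id-T)f=0$ is the delicate part: I would reduce to $\varphi\in L^1\cap L^2$, where $T$ acts as a genuine $L^2$-orthogonal projection, and then pass to $f\in L^1$ using approximation in $L^1$ together with the continuity in measure of the weak-type extension and the completeness of the wavelet system. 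Once $Tf=f$ is secured, the remaining verifications are the routine bookkeeping indicated above.
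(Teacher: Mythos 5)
Your proof follows the paper's argument exactly: the paper likewise takes $A=\{(r,l):\langle f,\Psi_{rl}\rangle\neq 0\}$, lets $T$ be the orthogonal projection onto $\spn\{\Psi_{rl}:(r,l)\in A\}$, asserts $Tf=f$, and sets $g_k=Tf_k$ with $f_k$ supplied by the first corollary. The additional care you devote to justifying $Tf=f$ and to the persistence of the vanishing coefficients in the $L^p$ limit only spells out what the paper leaves implicit.
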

\begin{proof}
Denote by $A$ the set  $\{(r,l):\langle f,\Psi_{rl} \rangle \neq 0\}$ and let $T$ be the orthogonal projection on $\spn\{\Psi_{rl}:(r,l)\in A\}$. Then $Tf=f$ and we can set $g_k=Tf_k$ where $f_k$ are the functions from the first corollary.
\end{proof}

\section{Weighted stability for singular integrals}

In this section we will be interested in weighted spaces $L^p(\R^d; w)$ and action of singular integral operators on them. The standart information about these things can be found for example in the book \cite{RdF}. By singular integral operator (or Calder\' on--Zygmund operator) we mean the operator $T$ bounded on $L^2(\R^d)$ and which possesses the kernel --- the function 
$K(x, y)$ such that
$$
(Tf)(x)=\int_{\R^d}K(x,y)f(y) dy
$$
for all $f$ with compact support and all $x$ outside this support. We assume that for the kernel $K$ and $x$, $y_1$, $y_2$ such that $y_1$ and $y_2$ are inside some cube $Q$ and $x\not \in 5Q$ the following inequality holds:
$$
|K(x,y_1)-K(x,y_2)|\leq C \frac{|y_1-y_2|^\alpha}{|x-y_1|^{d+\alpha}},
$$
where $\alpha$ is a positive number (not depending on $x$, $y_1$ and $y_2$). Besides that, we will need weights from
Muckenhoupt classes $A_p$ --- all necessary information about them (in particular, the boundedness of Calderon – Zygmund operators
on the spaces $L^p(w)$ with $w\in A_p$) can be found in the books \cite{RdF} and \cite{Graf}. In the book \cite{RdF}, among other things,
the following fact is proved, which is a weight analogue of the property of Calder\' on – Zygmund operators, called in \cite{KK}
long-range $L^1$ -regularity:
\begin{fact}
 Let $T$ be a Calder\' on--Zygmund operator and $f$ be a function with support in cube $Q$ such that $\int f=0$, $w\in A_1$. Then 
 $$
 \int_{\R^d\setminus 2\sqrt{d}Q}|Tf(x)|w(x)\lesssim \int_{\R^d}|f(x)|w(x) dx.
 $$
\end{fact}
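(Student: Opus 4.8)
The plan is to use the cancellation $\int f=0$ in the standard way and then to carry the weight $w$ through a dyadic decomposition of the complement of $2\sqrt{d}Q$, invoking the $A_1$ condition twice. Write $c_Q$ for the centre of $Q$ and $\ell(Q)$ for its side length, and put $\langle w\rangle_Q:=\frac{1}{|Q|}\int_Q w$. For $x\in\R^d\setminus 2\sqrt{d}Q$ we have $x\notin\supp f\subset Q$, so the kernel representation applies; subtracting $K(x,c_Q)\int_Q f=0$ gives
$$Tf(x)=\int_Q\big(K(x,y)-K(x,c_Q)\big)f(y)\,dy,$$
and Tonelli's theorem reduces the claim to a uniform bound on an inner integral:
$$\int_{\R^d\setminus 2\sqrt{d}Q}|Tf|\,w\le\int_Q|f(y)|\Big(\int_{\R^d\setminus 2\sqrt{d}Q}|K(x,y)-K(x,c_Q)|\,w(x)\,dx\Big)dy.$$

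The next step is to estimate the inner integral uniformly in $y\in Q$. The factor $2\sqrt{d}$ is chosen exactly so that $x\notin 2\sqrt{d}Q$ forces $|x-y|\ge\frac12|x-c_Q|$ for all $y\in Q$ (because $|y-c_Q|\le\frac{\sqrt{d}}{2}\ell(Q)\le\frac12|x-c_Q|$) and so that $x$ lies in the range where the smoothness hypothesis on $K$ is available. Combining the smoothness estimate with $|y-c_Q|\lesssim\ell(Q)$, the inner integral is at most
$$C\,\ell(Q)^\alpha\int_{\R^d\setminus 2\sqrt{d}Q}\frac{w(x)}{|x-c_Q|^{d+\alpha}}\,dx.$$
I would then split $\R^d\setminus 2\sqrt{d}Q$ into the dyadic annuli $A_k:=2^{k+1}\sqrt{d}Q\setminus 2^k\sqrt{d}Q$, $k\ge1$, on which $|x-c_Q|\simeq 2^k\ell(Q)$, so that the integral is dominated by $\sum_{k\ge1}(2^k\ell(Q))^{-(d+\alpha)}\,w(2^{k+1}\sqrt{d}Q)$. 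Here the first appeal to $A_1$ is made: for nested cubes $Q\subset Q'$ one has $\langle w\rangle_{Q'}\le[w]_{A_1}\operatorname{ess\,inf}_{Q'}w\le[w]_{A_1}\operatorname{ess\,inf}_Q w\le[w]_{A_1}\langle w\rangle_Q$, whence $w(2^{k+1}\sqrt{d}Q)\lesssim (2^k)^d|Q|\langle w\rangle_Q$ with a constant independent of $k$. Using $|Q|=\ell(Q)^d$, the series collapses to $\ell(Q)^{-\alpha}\langle w\rangle_Q\sum_{k\ge1}2^{-k\alpha}$, which converges since $\alpha>0$; thus the inner integral is $\lesssim\langle w\rangle_Q$ uniformly in $y$, and therefore $\int_{\R^d\setminus 2\sqrt{d}Q}|Tf|\,w\lesssim\langle w\rangle_Q\int_Q|f|$.

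The decisive step --- and the one that forces $w\in A_1$ rather than a weaker class --- is to turn $\langle w\rangle_Q\int_Q|f|$ into $\int_Q|f|\,w$. This is exactly the pointwise domination furnished by $A_1$: $\langle w\rangle_Q\le[w]_{A_1}\,w(y)$ for almost every $y\in Q$, so
$$\langle w\rangle_Q\int_Q|f(y)|\,dy\le[w]_{A_1}\int_Q|f(y)|\,w(y)\,dy=[w]_{A_1}\int_{\R^d}|f|\,w,$$
which is the asserted inequality.

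The only real friction I anticipate is the bookkeeping with the dilation constants: the smoothness hypothesis is stated for $x\notin 5Q$, whereas the conclusion concerns $x\notin 2\sqrt{d}Q$, and $2\sqrt{d}Q\supseteq 5Q$ holds only for $d\ge7$. For such $d$ the whole integration region lies inside the smoothness range and the argument above is complete. For small $d$ the leftover annulus $5Q\setminus 2\sqrt{d}Q$ has measure $\lesssim|Q|$ and sits at distance $\gtrsim\ell(Q)$ from $Q$, so it is disposed of by the same annular estimate (using the elementary size bound of the kernel, or simply by enlarging the excluded cube to $\max(5,2\sqrt{d})\,Q$); this adjusts the constants but not the structure of the proof.
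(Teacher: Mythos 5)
The paper does not prove this fact at all: it is quoted from Garc\'ia-Cuerva--Rubio de Francia \cite{RdF} as a known weighted analogue of long-range $L^1$-regularity. Your blind proof supplies the standard direct argument, and it is essentially correct: the cancellation $\int f=0$ plus the kernel smoothness give the pointwise bound $|Tf(x)|\lesssim \ell(Q)^{\alpha}|x-c_Q|^{-d-\alpha}\int_Q|f|$ off $2\sqrt{d}Q$, the dyadic-annulus summation with $w(2^{k+1}\sqrt{d}Q)\lesssim 2^{kd}|Q|\langle w\rangle_Q$ (first use of $A_1$, via monotonicity of the essential infimum over nested cubes) yields $\langle w\rangle_Q\int_Q|f|$, and the pointwise $A_1$ inequality $\langle w\rangle_Q\lesssim w(y)$ a.e.\ on $Q$ converts this to $\int_Q|f|w$. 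This is exactly the intended mechanism, and it makes transparent why $A_1$ (rather than a weaker class) is the right hypothesis.

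The one place where your argument is not airtight on the paper's own hypotheses is the dilation-constant issue you yourself flag. For $d\le 6$ the annulus $5Q\setminus 2\sqrt{d}Q$ is nonempty, and neither of your two suggested repairs quite works as stated: the paper's definition of a Calder\'on--Zygmund operator assumes only the H\"older-type smoothness of $K$ and $L^2$-boundedness, with no size bound $|K(x,y)|\lesssim|x-y|^{-d}$, so "the elementary size bound of the kernel" is not available; and enlarging the excluded cube to $\max(5,2\sqrt{d})Q$ proves a weaker statement than the one asserted (and the one actually used in Theorem 3, where the complement of $\cup 2\sqrt{d}Q_i$ appears). The correct repair stays within the stated hypotheses: for $x\in 5Q\setminus 2\sqrt{d}Q$ one has $\dist(x,Q)\gtrsim_d\ell(Q)$, so one can telescope $K(x,y)-K(x,c_Q)$ through a bounded number (depending only on $d$) of intermediate points on the segment from $c_Q$ to $y$, each consecutive pair lying in a small cube $Q_j$ with $x\notin 5Q_j$; summing the smoothness estimates recovers $|K(x,y)-K(x,c_Q)|\lesssim\ell(Q)^{\alpha}|x-c_Q|^{-d-\alpha}$ on this annulus, after which your annular computation applies verbatim. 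With that standard chaining step inserted, the proof is complete.
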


We note that, strictly speaking, in \cite{RdF} only singular integrals of convolution type are considered, that is, for
which $K(x, y)$ depends only on $ x-y $. However this plays no role in the proofs of the statements we need
(in particular, fact 3).

So, our goal is to prove an analogue of Theorem 1 for spaces with weights. To do this, we use the analogue of
Calder\' on--Zygmund decomposition which can be found in the article \cite{AK}. For an arbitrary weight $w$ and a measurable set $E$, we will
use the standart notation $w(E)$ for $\int_E w$. Suppose $a\in A_\infty$, $w\in A_1$, $ G\in L^1(w)$.
We set $b = \frac{a}{w}$, $g = Gb^{-1}$. Then $g\in L^1 (a)$. The weight $a$ lying in $ A_\infty$ possesses the doubling condition
(that is, $ a(2Q) \lesssim a(Q) $ for any cube $Q$), and therefore a Calder\' on--Zygmund partition can be applied to $g$ with weight $a$ and the parameter $\lambda$ and we get a set of non-intersecting dyadic cubes $ \{Q_i \} $, such that
$$
\lambda\leq\frac{1}{a(Q_i)}\int_{Q_i} |Gb^{-1}|a\leq C\lambda,
$$
and $|Gb^{-1}|\leq\lambda$ almost everywhere outside $\cup Q_i$. Then the "good part" of the decomposition is the function $G_t$, defined as follows:
$$
G_\lambda(x)=
\begin{cases}
 G(x), x\not \in\cup Q_i,\\
 \frac{b(x)}{b(Q_i)}\int_{Q_i} G, x\in Q_i.
\end{cases}
$$
We present the properties of this decomposition; their prooves can be found in the article \cite{AK}. We denote by $ \tilde{Q}$ the
cube $ 2\sqrt{d}Q $.
\begin{fact}
The cubes $Q_i$ we presented and the function $G_\lambda$ possess the following properties:\\
 1) $|G_\lambda|\lesssim \lambda b$;\\
 2) $\|G_\lambda\|_{L^1(w)}\lesssim \|G\|_{L^1(w)}$ and thus $\|G-G_\lambda\|_{L^1(w)}\lesssim\|G\|_{L^1(w)}$;\\
 3) $\int_{Q_i}(G-G_\lambda)=0$;\\
 4) $a(Q_i)\leq\frac{1}{\lambda}\int_{Q_i}|G|w$, and so $a(\cup \tilde{Q_i})\lesssim \frac{1}{\lambda}\|G\|_{L^1(w)}$.
\end{fact}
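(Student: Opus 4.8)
The plan is to verify the four assertions in the order: measure estimate, cancellation, pointwise bound, $L^1(w)$ bound, since the later items lean on the earlier ones. The whole argument rests on one bookkeeping identity: because $b^{-1}=w/a$, we have $b^{-1}a=w$, so $|g|\,a=|Gb^{-1}|\,a=|G|\,w$ pointwise, and the selection condition of the Calder\'on--Zygmund partition of $g$ with respect to $a\,dx$ rewrites as
$$
\lambda\le\frac{1}{a(Q_i)}\int_{Q_i}|G|\,w\le C\lambda .
$$
The left inequality is exactly the measure estimate $a(Q_i)\le\lambda^{-1}\int_{Q_i}|G|\,w$; summing over the disjoint cubes gives $a(\cup Q_i)\le\lambda^{-1}\|G\|_{L^1(w)}$, and passing from $Q_i$ to $\tilde{Q_i}=2\sqrt d\,Q_i$ costs only a dimensional constant since $a\in A_\infty$ is doubling. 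The cancellation property is then a one-line computation: on each $Q_i$ we have $\int_{Q_i}G_\lambda=\frac{1}{b(Q_i)}\bigl(\int_{Q_i}G\bigr)\int_{Q_i}b=\int_{Q_i}G$, whence $\int_{Q_i}(G-G_\lambda)=0$.

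For the pointwise bound $|G_\lambda|\lesssim\lambda b$ there are two regions. Outside $\cup Q_i$ we have $G_\lambda=G=gb$ and $|g|\le\lambda$ almost everywhere, so $|G_\lambda|\le\lambda b$ directly. On a cube $Q_i$ we have $|G_\lambda(x)|=b(x)\,b(Q_i)^{-1}\bigl|\int_{Q_i}G\bigr|$, so the claim reduces to the averaged inequality
$$
\frac{1}{b(Q_i)}\int_{Q_i}|G|\lesssim\lambda .
$$
This is the main obstacle. What the partition supplies is the $a$-weighted bound $a(Q_i)^{-1}\int_{Q_i}|G|\,w\le C\lambda$, whereas the quantity to control is an unweighted integral normalised by $b(Q_i)=\int_{Q_i}a/w$; writing $|G|=|g|b$, one must compare the $b\,dx$-average of $|g|$ with its $a\,dx$-average. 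Passing between these two averages is where the hypotheses on the weights enter: writing $\langle h\rangle_Q=|Q|^{-1}\int_Q h$, the lower regularity of $w\in A_1$ (so that $w$ is comparable to $\langle w\rangle_{Q_i}$ from below on each cube) together with the reverse Hölder / $A_\infty$ properties of $a$ are what yield $\langle w\rangle_{Q_i}\langle a/w\rangle_{Q_i}\gtrsim\langle a\rangle_{Q_i}$, and hence the displayed averaged inequality. I expect this weighted comparison --- carried out in \cite{AK} --- to be the only genuinely nontrivial step.

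Granting the pointwise bound, the $L^1(w)$ estimate follows by splitting $\|G_\lambda\|_{L^1(w)}$ into the contribution of $\R^d\setminus\cup Q_i$, where $G_\lambda=G$ and the contribution is at most $\|G\|_{L^1(w)}$, and the sum over the cubes. On each $Q_i$, using $bw=a$,
$$
\int_{Q_i}|G_\lambda|\,w=\frac{\bigl|\int_{Q_i}G\bigr|}{b(Q_i)}\int_{Q_i}bw=\frac{\bigl|\int_{Q_i}G\bigr|}{b(Q_i)}\,a(Q_i)\lesssim\lambda\,a(Q_i),
$$
by the averaged inequality just discussed; the measure estimate then bounds $\lambda\,a(Q_i)$ by $\int_{Q_i}|G|\,w$, and summing over the disjoint cubes yields $\sum_i\int_{Q_i}|G_\lambda|\,w\lesssim\|G\|_{L^1(w)}$. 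Adding the two contributions gives $\|G_\lambda\|_{L^1(w)}\lesssim\|G\|_{L^1(w)}$, and the triangle inequality turns this into $\|G-G_\lambda\|_{L^1(w)}\lesssim\|G\|_{L^1(w)}$. Thus, apart from the weighted average comparison, every step is a short computation built from the rewritten selection condition $\int_{Q_i}|g|\,a=\int_{Q_i}|G|\,w$ and the identity $bw=a$.
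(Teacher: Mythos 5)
The paper itself offers no proof of this Fact --- it simply refers the reader to \cite{AK} for all four properties --- so there is no internal argument to compare yours against; what follows is an assessment of your proof on its own terms. Your verification is correct and correctly organized: rewriting the selection condition via $|g|\,a=|G|\,w$ gives property 4 at once (with the doubling of $a\in A_\infty$ handling the passage to $\tilde{Q_i}$), the cancellation in property 3 is immediate from the definition of $G_\lambda$, and properties 1 and 2 reduce, exactly as you say, to the single averaged inequality $b(Q_i)^{-1}\int_{Q_i}|G|\lesssim\lambda$. The only step you leave to \cite{AK} is that inequality, and it is worth noting that it closes in two lines from the ingredients you name: by the $A_1$ condition, $1/w\lesssim 1/\langle w\rangle_{Q_i}$ a.e.\ on $Q_i$, so
$$
\int_{Q_i}|G|=\int_{Q_i}|g|\,\frac{a}{w}\lesssim\frac{1}{\langle w\rangle_{Q_i}}\int_{Q_i}|g|\,a\le\frac{C\lambda\,a(Q_i)}{\langle w\rangle_{Q_i}},
$$
while Cauchy--Schwarz gives $\langle a/w\rangle_{Q_i}\langle w\rangle_{Q_i}\ge\langle\sqrt{a}\rangle_{Q_i}^2$ and the $A_\infty$ property of $a$ (in its exponential form $\langle a\rangle_Q\le C\exp\langle\log a\rangle_Q$, combined with Jensen's inequality) gives $\langle\sqrt{a}\rangle_{Q_i}^2\gtrsim\langle a\rangle_{Q_i}$, whence $b(Q_i)\gtrsim a(Q_i)/\langle w\rangle_{Q_i}$; dividing the two estimates yields the claim. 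In particular your comparison $\langle w\rangle_{Q_i}\langle a/w\rangle_{Q_i}\gtrsim\langle a\rangle_{Q_i}$ is a consequence of $a\in A_\infty$ alone, and $w\in A_1$ enters only in the first step above --- a small correction to your attribution, not to your logic. With that one inequality supplied, your proof is complete and self-contained, which is more than the paper itself provides for this statement.
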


We now pass to the stability theorem.
\begin{theorem}
 Suppose $ 1 <p <\infty $ and let the weights $ w $ and $ v $ be such that $ w \in A_1 $, $ v \in A_p $ and
  $ a:= (\frac{w^p}{v})^{\frac{1}{p-1}} \in A_\infty $. Suppose
  $ T $ is the Calder\' on--Zygmund operator and the function $ f \in L^1(w) $ is such that $ Tf \in L^1(w)$. Then for any $s>0$                        there exists
  a function $ u^{(s)} \in L^1(w) $ such that
  \begin{align*}
  \|u^{(s)}\|_{L^p(v)}\lesssim& s,\\
  \|f-u^{(s)}\|_{L^1(w)}\lesssim& \dist_{L^1(w)}(f, B_{L^p(v)}(s)),\\
  \|Tf-Tu^{(s)}\|_{L^1(w)}\lesssim& \dist_{L^1(w)}(f,B_{L^p(v)}(s))+\dist_{L^1(w)}(Tf, B_{L^p(v)}(s)).
 \end{align*}
\end{theorem}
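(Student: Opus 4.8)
The plan is to follow the proof of Theorem~2 almost verbatim, substituting the weighted Calder\'on--Zygmund decomposition of Fact~4 for the wavelet decomposition and substituting a weighted $L^p(v)$ estimate for Lemma~1. First I would fix an arbitrary near-minimizer $h$ for $f$, so that $\|h\|_{L^p(v)}\le s$ and $\|f-h\|_{L^1(w)}\le 2\dist_{L^1(w)}(f,B_{L^p(v)}(s))$. Applying the decomposition of Fact~4 to $G:=f-h$ at level $\lambda=t$, with $t$ determined by $t^{p-1}\|f-h\|_{L^1(w)}=s^p$, I set $u^{(s)}:=h+(f-h)_t$, where $(f-h)_t$ is the good part $G_\lambda$. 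Here $b=a/w$ and $\{Q_i\}$ are the cubes produced by the decomposition of $G$.

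The central new ingredient is the weighted counterpart of Lemma~1, namely
$$
\|G_\lambda\|_{L^p(v)}\lesssim\lambda^{1-1/p}\|G\|_{L^1(w)}^{1/p}.
$$
Its proof rests entirely on the defining relation $a=(w^p/v)^{1/(p-1)}$, which gives the pointwise identities $b^p v=a$ and $b^{p-1}v=w$. On each cube I would combine property~1 of Fact~4, $|G_\lambda|\lesssim\lambda b$, with property~4, $\lambda\,a(Q_i)\le\int_{Q_i}|G|w$, to obtain $\int_{Q_i}|G_\lambda|^p v\lesssim\lambda^p a(Q_i)\le\lambda^{p-1}\int_{Q_i}|G|w$; outside the cubes $G_\lambda=G$ and $|G|\le\lambda b$, so $\int_{\R^d\setminus\cup Q_i}|G_\lambda|^p v\le\lambda^{p-1}\int b^{p-1}|G|v=\lambda^{p-1}\int|G|w$, and summing yields the claim. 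The first two conditions then follow as in Theorem~2: the choice of $t$ converts the estimate into $\|(f-h)_t\|_{L^p(v)}\lesssim s$, whence $\|u^{(s)}\|_{L^p(v)}\lesssim s$, while property~2 of Fact~4 gives $\|f-u^{(s)}\|_{L^1(w)}=\|G-G_\lambda\|_{L^1(w)}\lesssim\|f-h\|_{L^1(w)}$.

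For the third condition I would fix a near-minimizer $\psi$ for $Tf$, with $\|\psi\|_{L^p(v)}\le s$ and $\|Tf-\psi\|_{L^1(w)}\le 2\dist_{L^1(w)}(Tf,B_{L^p(v)}(s))$, and split, writing $\tilde Q_i=2\sqrt d\,Q_i$,
$$
\|Tf-Tu^{(s)}\|_{L^1(w)}\le\int_{\R^d\setminus\cup\tilde Q_i}|Tf-Tu^{(s)}|w+\int_{\cup\tilde Q_i}|Tf-\psi|w+\int_{\cup\tilde Q_i}|Tu^{(s)}-\psi|w.
$$
On the complement of the dilated cubes the integrand equals $T$ applied to the bad part $\sum_i(G-G_\lambda)\chi_{Q_i}$, whose pieces are supported in $Q_i$ with vanishing integral by property~3, so Fact~3 bounds this term by $\sum_i\int_{Q_i}|G-G_\lambda|w\lesssim\|f-h\|_{L^1(w)}$. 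The middle term is at most $\|Tf-\psi\|_{L^1(w)}$. For the last term I would apply H\"older with exponents $p,p'$, using once more the relation for $a$ in the form $v^{-p'/p}w^{p'}=a$, so that the integral is bounded by $\|Tu^{(s)}-\psi\|_{L^p(v)}\,a(\cup\tilde Q_i)^{1/p'}$; boundedness of $T$ on $L^p(v)$ (available since $v\in A_p$) controls the first factor by $s$, property~4 controls the second by $(\|f-h\|_{L^1(w)}/t)^{1/p'}$, and the choice of $t$ collapses the product to $\|f-h\|_{L^1(w)}$, finishing the estimate.

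The main obstacle is the weighted $L^p(v)$ bound for the good part: one must verify that the single exponent relation $a=(w^p/v)^{1/(p-1)}$ simultaneously closes the $L^p(v)$ estimate (through $b^p v=a$ and $b^{p-1}v=w$) and the H\"older duality step (through $v^{-p'/p}w^{p'}=a$), and that the hypotheses $w\in A_1$, $v\in A_p$, $a\in A_\infty$ are exactly what make Facts~3 and~4 and the $L^p(v)$-boundedness of $T$ applicable. Once these identities are recorded, the remainder is a direct transcription of the proof of Theorem~2 with Facts~3 and~4 in place of Facts~1 and~2 and of Lemma~1.
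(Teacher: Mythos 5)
Your proposal is correct and follows essentially the same route as the paper: the same choice of $u^{(s)}=h+(f-h)_t$ with $t^{p-1}\|f-h\|_{L^1(w)}=s^p$, the same use of Facts~3 and~4, and the same three-term splitting with H\"older via $w=v^{1/p}a^{1/p'}$. The only cosmetic difference is that you package the $L^p(v)$ bound for the good part as a standalone lemma proved by splitting inside/outside the cubes, whereas the paper obtains the same estimate in one line from $|G_\lambda|\lesssim\lambda b$ and the identity $b^{p-1}v=w$.
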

\begin{proof}
 Once the decomposition we need is described, for the proof of the theorem it remains only to repeat the argument from the book \cite{KK}.
 Let $ h $ be a function for which the inequalities $\|h\|_{L^p(v)}\leq s$, 
 $\|f-h\|_{L^1(w)}\leq 2\dist_{L^1(w)}(f,B_{L^p(v)}(s))$ hold. We set $ u^{(s)}:= h + (f-h)_t$ where $ t $ is a number such that
 $ t^{p-1} \| f-h \|_{L^1(w)} = s^p $. Here $ (f-h)_t $ is the function described above
 (and it was constructed with respect to the weights $ w \in A_1 $ and $ a \in A_\infty $). We check that
 $ u^{(s)} $ is a near-minimizer. Indeed, 
 $$
 \|f-u^{(s)}\|_{L^1(w)}\leq \|f-h\|_{L^1(w)}+\|(f-h)_t\|_{L^1(w)}
 $$
 which by the fact 4 does not exceed 
 $$
 C\|f-h\|_{L^1(w)}\lesssim \dist_{L^1(w)}(f,B_{L^p(v)}(s))
 $$
 The norm of $u^{(s)}$ in $ L^p(v) $ is also easily estimated:
 $$
 \|u^{(s)}\|_{L^p(v)}\leq \|h\|_{L^p(v)}+\|(f-h)_t\|_{L^p(v)}\leq s+\Big(\int|(f-h)_t|v\Big)^{1/p}.
 $$
 Taking into consideration that, according to fact 4, $|(f-h)_t|\lesssim tb$, where $b=aw^{-1}=(wv^{-1})^{\frac{1}{p-1}}$,
  the second term, up to a constant multiplication, is less than or equal to
  $$
 \Big(\int t^{p-1} b^{p-1} |(f-h)_t| v\Big)^{1/p}=t^{\frac{p-1}{p}}\|(f-h)_t\|_{L^1(w)}\lesssim
 t^{\frac{p-1}{p}}\|(f-h)\|_{L^1(w)}=s.
 $$
 Thus $\|u^{(s)}\|_{L^p(v)}\lesssim s$. It remains to check that the last propert holds, that is, the stability of $u^{(s)}$ under the action of $T$. In order to do this, we consider the near-minimizer $g$ for $Tf$ such that 
 $\|g\|_{L^p(v)}\leq s$ и
 $\|Tf-g\|_{L^1(w)}\leq 2\dist_{L^1(w)}(Tf, B_{L^p(v)}(s))$ and write: 
 $$
 \|T(f-u^{(s)})\|_{L^1(w)}\leq \int_{\R^d\setminus\cup\tilde{Q_i}} |Tf-Tu^{(s)}|w + 
 \int_{\cup\tilde{Q_i}}|Tf-g|w+\int_{\cup\tilde{Q_i}}|Tu^{(s)}-g|w.
 $$
\end{proof}
We estimate the first term. Note that $f-u^{(s)}=(f-h)-(f-h)_t$ is a function with support in $\cup Q_i$, moreover, according to fact 4, its integral over each of the cubes $ Q_i $ is equal to zero. Therefore, using fact 3, the first summand can be estimated by
$$
\|(f-h)-(f-h)_t\|_{L^1(w)}\lesssim \|f-h\|_{L^1(w)}\lesssim \dist_{L^1(w)}(f, B_{L^p(v)}(s)).
$$
The second summand is less than or equal to
$$
\|Tf-g\|_{L^1(w)}\leq 2\dist_{L^1(w)}(Tf, B_{L^p(v)}(s)).
$$
In order to estimate the third one we use the H\" older's inequality:
$$
\int_{\cup\tilde{Q_i}}|Tu^{(s)}-g|w=\int_{\cup\tilde{Q_i}}|Tu^{(s)}-g|v^{1/p}a^{1/p'}\leq
\Big(\int_{\cup\tilde{Q_i}}|Tu^{(s)}-g|^p v\Big)^{1/p}a(\cup\tilde{Q_i})^{1/p'}.
$$
Finally, using the last statement of fact 4 (as well as the facts that the operator $ T $ is bounded on $L^p(v)$
and $ \| g \|_{L^p(v)} \lesssim s $, $ \| u^{(s)} \|_{L^p(v)} \lesssim s $),
we conclude that our expression is estimated by the following:
$$
(\|Tu^{(s)}\|_{L^p(v)}+\|g\|_{L^p(v)})\frac{1}{t^{1/p'}}\|f-h\|_{L^1(w)}^{1/p'}\lesssim
s\Big(\frac{\|f-h\|_{L^1(w)}}{t}\Big)^{\frac{p-1}{p}}=\|f-h\|_{L^1(w)}.
$$
According to our choise of the function $h$, this expression is less than or equal to
$$
2\dist_{L^1(w)}(f,B_{L^p(v)}(s)).
$$
It remains to collect the estimates and the theorem is proved.

\end{document}